\newcommand{\erre} {{\mathbb {R}}}
\newcommand{\enne} {{\mathbb {N}}}
\def\erren{{\erre^{ {n} }}}
\def\av{``}
\def\cv{''}
\def\supp{\mathrm{supp\, }}
\def\inn{\mbox{ in }}
\def\fe {\mbox{ for every  }}
\def\ass{\mbox{ as }}
\def\andd{ \quad\mbox{ and } \quad }
\newcommand{\tende}{\rightarrow}
\newcommand{\ttende}{\longrightarrow}
\newcommand{\frecciaf} {\longmapsto}
\newcommand{\meno} {\smallsetminus}
\newcommand\eps{\varepsilon}
\newcommand\partiali{\partial_{x_i}}
\newcommand\partialj{\partial_{x_j}}
\newcommand {\elle} {{\mathcal {L}}}
\newtheorem{theorem}{Theorem}[section]
\newtheorem{corollary}[theorem]{Corollary}
\newtheorem{lemma}[theorem]{Lemma}
\theoremstyle{remark}
\newtheorem{remark}[theorem]{Remark}
\theoremstyle{definition}
\newtheorem{definition}[theorem]{Definition}
\theoremstyle{remark}
\newcounter{tmp}
\numberwithin{equation}{section}
\title[Average solutions and Pizzetti-type Theorem]{Asymptotic average solutions \\ to linear second order semi-elliptic PDEs:\\ a Pizzetti-type Theorem}
\author{Alessia E. Kogoj}
\address{Dipartimento di Scienze Pure e Applicate (DiSPeA)\\ 
				 Universit\`{a} degli Studi di Urbino Carlo Bo\\
				 Piazza della Repubblica 13, 61029 Urbino (PU), Italy.}
\email{alessia.kogoj@uniurb.it}
\author{Ermanno Lanconelli}
\address{Dipartimento di Matematica\\ 
				 Alma Mater Studiorum Universit\`{a} di Bologna\\
				 Piazza di Porta San Donato 5, 40126 Bologna, Italy.}
\email{ermanno.lanconelli@unibo.it}
\subjclass[2010]{35D99, 35B05, 35J70, 35H10}
\keywords{asymptotic mean value formulas, semi-elliptic operators, hypoelliptic operators, Poisson type-equations}
\begin{document}

\begin{abstract}

By exploiting an old idea first used by Pizzetti for the classical Laplacian, we introduce a notion of  {\it asymptotic average solutions}  making pointwise solvable every Poisson equation $\elle u(x)=-f(x)$ with continuous data $f$, 
where $\elle$ is  a hypoelliptic linear partial differential operator with positive semidefinite characteristic form.

\end{abstract}
    
\maketitle

\section{Introduction}

The Poisson-type equations related to hypoelliptic linear second order PDE's with nonnegative characteristic
form cannot be studied in $L^p$ spaces due to the lack of a suitable Calderon-Zygmund theory for the relevant 
singular integrals. Our paper presents a result allowing to satisfactory study such equations in spaces of continuous
functions. We follow a procedure introduced by Pizzetti in his 1909's paper \cite{pizzetti} based on the asymptotic average solutions for 
the classical Poisson-Laplace equation.

\subsection{}\label{a}
Let $\Omega$ be a bounded open subset of $\erren$, and let $f:\Omega\ttende \erre$ be a continuous bounded function. Let us denote by $u_f$ the Newtonian potential of $f$, i.e., 
$$u_f:\erren\ttende\erre,\qquad u_f(x):=\int_\Omega \Gamma(y-x)f(y)\ dy.$$
Here $\Gamma$ denotes the fundamental solution of the Laplace equation, i.e.,
$$\Gamma(x)=c_n |x|^{2-n}, x\in\erren\meno\{0\},$$ 
{$\omega_n$ being the volume of the unit ball in $\erren$ and $c_n:=\dfrac{1}{n(n-2)\omega_n}$.}

It is well known that $u_f\in C^1(\erren, \erre),$ while, in general, ${{u_f}|}_\Omega \notin C^2(\Omega, \erre)$. However, in the weak sense of  distributions, 
\begin{equation}\label{uno} \varDelta u_f=-f\inn\Omega. \end{equation}
As a consequence, if the continuous function $f$ is such that
\begin{equation}\label{due} u_f\notin C^2(\Omega,\erre), \end{equation}
then the Poisson equation
\begin{equation}\label{tre} \varDelta v=- f \end{equation}
has no classical solutions, i.e., there does not exist a function $v\in C^2(\Omega,\erre)$ satisfying 
$$\varDelta v (x)=-f(x) \fe x\in\Omega.$$
Indeed, assume by contradiction that such a function exists. Then, by \eqref{uno},
\begin{equation*} \varDelta (u_f - v) =0 \inn\Omega \end{equation*}
in the weak sense of distributions, so that, by Caccioppoli--Weyl's Lemma, there exists a function $h$, harmonic in $\Omega$, such that
$$u_f(x)-v(x)=h(x)$$
a.e. in $\Omega$. Therefore, $u_f-v$ being continuous in $\Omega$, 

$$u_f= v + h \in C^2(\Omega, \erre),$$
in contradiction with \eqref{due}.  This proves the existence of continuous functions $f$ such that the Poisson equation \eqref{tre} is not {\it pointwise} solvable. In his paper \cite{pizzetti}, Pizzetti introduced a notion of  {\it pointwise weak Laplacian},  making pointwise solvable every Poisson equation with continuous data. Pizzetti started from the following remark. Given a 
function $u$ of class $C^2$ in $\Omega$ one has 
\begin{equation}\label{quattro} 
\lim_{r\ttende 0} \frac{M_r(u)(x)- u(x)}{r^2}= \frac{1}{2(n+2)} \varDelta u(x)
 \end{equation}
for every $x\in\Omega.$ Here $M_r$ denotes the {\it Gauss average} 
$$M_r(u)(x):= \frac{1}{|B(x,r)|} \int_{\partial B(x,r)} u(y)\ dy,$$
$|B(x,r)|$ being the volume of  $B(x,r)$, the Euclidean ball centered at $x$ with radius $r$. Then, if $u\in C(\Omega,\erre)$ is such that the limit at the left hand side of \eqref{quattro}  exists at a point $x\in\Omega$, Pizzetti defines 

\begin{equation*}
\varDelta_a u(x):= 2(n+2) \lim_{r\ttende 0} \frac{M_r(u)(x)- u(x)}{r^2}. 
\end{equation*}
We call $\varDelta_a u(x)$ the {\it asymptotic average Laplacian} of $u$ at $x$.  Keeping in mind \eqref{quattro}, if $u\in C^2(\Omega,\erre)$, then 
\begin{equation*}
\varDelta_a u(x)=\varDelta u(x) \fe x\in\Omega.
\end{equation*}
We denote by 
$$\mathcal{A}(\Omega,\varDelta)$$ the class of  functions $u\in C(\Omega,\erre)$, such that 
$\varDelta_a u(x)$ exists at any point $x\in\Omega.$  Obviously, $\mathcal{A}(\Omega,\varDelta)$ is a (linear) sub-space of 
$C(\Omega,\erre).$  Moreover, by the previous remark, 
$$C^2(\Omega,\erre)\subseteq \mathcal{A}(\Omega,\varDelta).$$  
Pizzetti proved that the Newtonian potentials of continuous bounded functions are contained in $\mathcal{A}(\Omega,\varDelta).$ Precisely he proved the following theorem.

\begingroup
\setcounter{tmp}{\value{theorem}}% store current value of theorem counter
\setcounter{theorem}{0} %assign desired value to theorem counter
\renewcommand\thetheorem{\Alph{theorem}}% locally redefine the representation of the theorem counter
\begin{theorem}[Pizzetti Theorem] Let $\Omega\subseteq\erren, n\geq3,$ be a bounded open subset of $\erren$ and let $f:\Omega\ttende\erre$ be a bounded continuous function. Then 
$$u_f\in \mathcal{A}(\Omega,\varDelta)$$ and 
$$\varDelta_a u_f = -f \inn \Omega.$$
\end{theorem}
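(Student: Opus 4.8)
The plan is to interchange the average $M_r$ with the potential integral and to reduce the statement to an explicit computation of the $M_r$-average of the fundamental solution. Fix $x\in\Omega$; since $\Omega$ is open, it suffices to treat radii $r>0$ so small that $B(x,r)\subseteq\Omega$. As $f$ is bounded and $\Gamma\in L^1_{\mathrm{loc}}(\erren)$, Fubini's theorem yields
\begin{equation*}
M_r(u_f)(x)=\int_\Omega N_r(x,y)\,f(y)\,dy ,\qquad N_r(x,y):=M_r\!\left(\Gamma(\,\cdot\,-y)\right)(x),
\end{equation*}
and, subtracting $u_f(x)=\int_\Omega\Gamma(y-x)\,f(y)\,dy$,
\begin{equation*}
M_r(u_f)(x)-u_f(x)=\int_\Omega K_r(x,y)\,f(y)\,dy ,\qquad K_r(x,y):=N_r(x,y)-\Gamma(x-y).
\end{equation*}

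The first task is to determine $K_r$. As a function of its first argument, $\Gamma(\,\cdot\,-y)=c_n|\,\cdot\,-y|^{2-n}$ is harmonic on $\erren\meno\{y\}$ and superharmonic on $\erren$; hence for $|x-y|\ge r$ the mean value property gives $N_r(x,y)=\Gamma(x-y)$, so that $K_r(x,y)=0$, while for $|x-y|<r$ the mean value inequality for superharmonic functions gives $K_r(x,y)\le 0$. To get the exact value when $|x-y|<r$, note that (since $\Gamma$ is even) the map $y\mapsto N_r(x,y)$ is precisely the Newtonian potential of the uniform probability density on $B(x,r)$; being radial about $x$, bounded near $x$, and a solution of $-\varDelta(\,\cdot\,)=|B(x,r)|^{-1}$ inside $B(x,r)$, it must coincide there with
\begin{equation*}
N_r(x,y)=\frac{1}{2(n-2)\,\omega_n\,r^{n-2}}-\frac{|x-y|^2}{2n\,\omega_n\,r^{n}},
\end{equation*}
the two constants being forced by the $C^1$ matching with $\Gamma(x-y)$ across the sphere $|x-y|=r$.

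Next I would integrate the kernel. Since $K_r(x,\cdot)$ is supported in $B(x,r)\subseteq\Omega$, passing to polar coordinates and using the two displayed formulas reduces everything to elementary one-variable integrals, which give the exact identity
\begin{equation*}
\int_{B(x,r)}K_r(x,y)\,dy=-\frac{r^2}{2(n+2)},
\end{equation*}
and hence, using $K_r(x,\cdot)\le 0$, also $\int_{B(x,r)}|K_r(x,y)|\,dy=\dfrac{r^2}{2(n+2)}$. Writing $f(y)=f(x)+\left(f(y)-f(x)\right)$ we obtain
\begin{equation*}
\frac{M_r(u_f)(x)-u_f(x)}{r^2}=-\frac{f(x)}{2(n+2)}+\frac{1}{r^2}\int_{B(x,r)}K_r(x,y)\left(f(y)-f(x)\right)dy ,
\end{equation*}
and the last term is bounded in modulus by $\dfrac{1}{2(n+2)}\sup_{|y-x|\le r}|f(y)-f(x)|$, which tends to $0$ as $r\to 0$ by continuity of $f$ at $x$. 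Thus the limit defining $\varDelta_a u_f(x)$ exists and equals $-\dfrac{f(x)}{2(n+2)}$, so that $\varDelta_a u_f(x)=2(n+2)\left(-\dfrac{f(x)}{2(n+2)}\right)=-f(x)$; that is, $u_f\in\mathcal A(\Omega,\varDelta)$ and $\varDelta_a u_f=-f$ in $\Omega$.

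The computation is essentially self-contained, so there is no deep obstacle; the step that must be carried out carefully is the explicit identification of $N_r$ (and of the normalizing constants), after which the whole result rests on the algebraic cancellation that makes $\int_{B(x,r)}K_r$ equal to exactly $-r^2/(2(n+2))$, Pizzetti's constant, and on the sign $K_r\le 0$, which is precisely what allows the error term to be controlled by the modulus of continuity of $f$ rather than by its second derivatives. The justification of Fubini is routine, using only $\Gamma\in L^1_{\mathrm{loc}}$, $f\in L^\infty$ and $\Omega$ bounded.
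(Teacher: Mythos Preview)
Your argument is correct. The interchange via Fubini is justified exactly as you say, the identification of $N_r(x,\cdot)$ as the Newtonian potential of the uniform probability measure on $B(x,r)$ is classical, the interior formula you write down is the right one, the integral $\int_{B(x,r)}K_r(x,y)\,dy=-r^2/(2(n+2))$ checks out, and the sign $K_r\le 0$ (from superharmonicity of $\Gamma$) is precisely what lets you bound the remainder by the oscillation of $f$.

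However, your route is genuinely different from the one the paper uses. The paper does not prove Theorem~A directly; it obtains it as the special case $\mathcal L=\varDelta$ of the main Theorem~1.5, whose proof proceeds by (i) approximating $f$ by smooth compactly supported $f_p$, (ii) applying to the smooth potentials $u_{f_p}$ the representation formula $u=M_r(u)-N_r(\mathcal L u)$ from \cite{bl2013}, (iii) passing to the limit in $p$ to get $u_f(x)=M_r(u_f)(x)+N_r(f)(x)$, and then (iv) dividing by $Q_r(x)$ and using continuity of $f$. No explicit formula for $\Gamma$ or for the ball average of $\Gamma$ is ever computed. Your approach is more elementary and fully self-contained for the Laplacian: it exploits the radial, explicit form of $\Gamma$ and Newton's shell theorem to evaluate $M_r(\Gamma(\cdot-y))(x)$ exactly, and the single kernel identity does all the work. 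The price is that this computation does not survive the passage to a general hypoelliptic $\mathcal L$, where $\Gamma$ is neither explicit nor radial; the paper's approximation-and-representation scheme, by contrast, never needs to know what $\Gamma$ looks like and therefore carries over verbatim to the operators in \eqref{cinque}.
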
\endgroup
The aim of this paper is to extend the notion of asymptotic average solution and Pizzetti's Theorem to the class of linear second order semi-elliptic partial differential operators that we will introduce in the next subsection.

\subsection{}\label{b} We will deal  with partial differential operators of the type

\begin{equation} \label{cinque} 
  \elle = \sum_{i,j=1}^n \partiali (\partialj a_{ij}(x)),\ x \in \erren,
\end{equation}
where $A(x):=(a_{ij}=a_{ji})_{i,j=1,\ldots,n}$  is a symmetric nonnegative definite matrix, 
$$x\frecciaf a_{ji}(x),\qquad i,j=1,\ldots,n$$
are smooth functions in $\erren$ and
$$\sum_{i=1}^n a_{ii}(x)  > 0 \fe  x  \in \erren.$$

Together with these qualitative properties we assume that $\elle$ is hypoelliptic in $\erren$ and endowed with a smooth fundamental solution 
$$\Gamma: \{ (x,y)\in\erren \times \erren\  | \ x\neq y \} \ttende \erre,$$ such that 
\begin{itemize}
\item[{\rm(i)}] $\Gamma(x,y)=\Gamma(y,x)>0, \fe x\neq y;$
\item[{\rm(ii)}] $\lim_{x\tende y}\Gamma(x,y) =\infty, \fe y\in\erren;$
\item[{\rm(iii)}] $\lim_{x\tende \infty} \left(\sup_{y\in K} \Gamma(x,y)\right) =0$, for every compact set $K\subseteq \erren;$
\item[{\rm(iv)}] $\Gamma(x, \cdot)$ belongs to  $L^1_{\rm loc}(\erren)$,  for every  $x\in\erren$.
\end{itemize}

We recall that when we say that $\Gamma$ is a fundamental solution of $\elle$ we mean that, for every $\varphi\in C_0^\infty(\erren,\erre)$ and $x\in\erren$:
$$\int_\erren \Gamma(x,y)\, \elle\varphi(y)\ dy=- \varphi(x).$$

\subsection{}\label{c} Important examples of operators satisfying our assumptions are the \av sum of squares\cv  of homogeneous 
H\"ormander vector fields. Precisely: let $$X=\{X_1,\ldots,X_m\}$$ be a family of linearly independent smooth vector fields such that 

\begin{itemize}

\item[(H1)] $X_1,\ldots,X_m$ satisfy the H\"ormander rank condition at $x=0$, that is, 
$$\dim \{ Y(0)\ |\ Y\in \mathrm{Lie} \{X_1,\ldots,X_m\}\}=n;$$

\item[(H2)]  $X_1,\ldots,X_m$  are homogeneous of degree $1$ with respect to a group of dilations $(\delta_\lambda)_{\lambda>0} $ of the following type
$$\delta_\lambda:\erren \ttende \erren,$$
$$\delta_\lambda(x)=\delta_\lambda(x_1,\ldots,x_n)=(\lambda^\sigma x_1,\ldots,\lambda^{\sigma_n}x_n),$$
where the $\sigma_j$'s are natural numbers such that $1\le\sigma_1\le\ldots\le \sigma_n.$

\end{itemize}
Then,
\begin{equation} \label{sei} 
  \elle = \sum_{j=1}^m X_j^2 
  \end{equation}
satisfies all the assumptions listed in subsection \ref{b} (see \cite{bb2017}, \cite{bbb2022}).

We stress that the sub-Laplacians on stratified Lie groups in $\erren$ are particular cases of the operator $\elle$ in 
\eqref{sei}.
\subsection{}\label{d} The extension of Pizzetti's Theorem to the operator $\elle$ in \eqref{cinque} rests on some representation formulas on the superlevel set of $\Gamma$. If $x\in\erre$ and $r>0$, define 
$$\Omega_r(x):=\left\{ y\in\erren\ :\ \Gamma(x,y)> \frac{1}{r} \right\}.$$
We will call $\Omega_r(x)$ the $\elle$-ball centered at $x$ and with radius $r$. It is easy to recognize that $\Omega_r(x)$ is a nonempty bounded open set of $\erren$. Moreover 
\begin{equation} \label{sette} 
  \bigcap_{r>0} \Omega_r(x)=\{ x\}
\end{equation}
and 
\begin{equation*} \frac{|\Omega_r(x)|}{r}\ttende 0 \ass   r\ttende 0.\footnote{If $E$ is a measurable set of $\erren$, $|E|$ denotes its Lebesgue measure.} 
\end{equation*}
\begingroup
\setcounter{tmp}{\value{theorem}}% store current value of theorem counter
\setcounter{theorem}{0}
\begin{remark}\label{remarkuno} If $\elle=\varDelta$, then
$$\Omega_r(x)=B(x,\rho), \mbox{ \ with } \rho=(c_nr)^{\frac{1}{n-2}}.$$
Let $\Omega\subseteq\erren$ be open and let $u\in C^{2}(\Omega,\erre)$. Then, for every $\elle$-ball, $\Omega_r(x)$ such that $\overline{\Omega_r(x)}\subseteq\Omega$ and for every $\alpha>-1$ we have 
\begin{equation} \label{otto} 
u(x)=M_r(u)(x)-N_r(\elle u)(x),
\end{equation}
 where $M_r$ and $N_r$ are the following average operators: 
 
 \begin{equation} \label{nove} 
  M_r(u)(x):= \frac{\alpha +1}{r^{\alpha +1}} \int_{\Omega_r(x)} u(y) K(x,y)\ dy, 
  \end{equation}
  where
  $$K(x,y):=\frac{ \langle A(y) \nabla_y \Gamma (x,y),  \nabla_y \Gamma (x,y)\rangle}{(\Gamma(x,y))^{\alpha+2}} ;$$
  
 \begin{equation} \label{dieci} 
  N_r(w)(x):= \frac{\alpha +1}{r^{\alpha +1}} \int_0^r \rho^\alpha \left( \int_{\Omega_r(x)} \left(\Gamma(x,y) - \frac{1}{\rho}\right) w(y)\  dy\right)\ d\rho. \end{equation}
  
  \end{remark}
  The proof of the representation formula \eqref{otto} can be found in \cite{bl2013}.
    \begin{remark}\label{remarkdue} If $\elle=\varDelta$ and $\alpha=\dfrac{2}{n-2}$, then the kernel $K$ is constant and $M_r$ becomes the Gauss average on the Euclidean ball $B(x,\rho),$ with $\rho=(c_n r)^{\frac{1}{n-2}}.$
\end{remark} 
Letting 

\begin{equation} \label{undici} 
  Q_r(x):=N_r(1)= \frac{\alpha +1}{r^{\alpha +1}} \int_0^r \rho^\alpha \left( \int_{\Omega_r(x)} (\Gamma(x,y) - \frac{1}{\rho}\right)\  dy)\ d\rho, 
  \end{equation}
  an easy computation shows that 
  \begin{equation*} 
  Q_r(x)= \int_0^r \frac{\Omega_\rho (x)}{\rho^2} \left( 1 - \left(\frac{\rho}{r}\right)^{\alpha+1}\right)\ d\rho. 
  \end{equation*}

  \begin{remark}\label{remarktre} If $\elle=\varDelta$ and $\alpha=\dfrac{2}{n-2}$, then, letting $\rho=(c_n r)^{\frac{1}{n-2}}$, we get 
\begin{equation*} 
 \frac{M_r(u)(x)- u(x)}{Q_r(x)} = 2(n+2) \frac{ \frac{1}{|B(x,\rho)|} \int_{B(x,\rho)} u(y)\ dy -u(0)}{\rho^2},
 \end{equation*}
so that, by \eqref{quattro}, 

\begin{equation} \label{dodici} 
\lim_{r\ttende 0}   \frac{M_r(u)(x)- u(x)}{Q_r(x)} =\varDelta u(x). \end{equation}
\end{remark} 

The limit in \eqref{dodici} extends to all the operators $\elle$ in \eqref{cinque}. Indeed, if $u$ is a $C^2$ function in an open set $\Omega\subseteq\erren$, from the representation formula \eqref{otto} and the identity \eqref{sette}, using Corollary 2.5 in Section 2, one immediately  gets

\begin{equation*} 
\lim_{r\ttende 0}   \frac{M_r(u)(x)- u(x)}{Q_r(x)} =\elle u(x). 
\end{equation*}
Then, in analogy with the case $\elle=\varDelta$, we introduce the following definition.

\begin{definition}\label{definizioneuno} Let $\elle$ be a partial differential operator satisfying the assumptions 
of subsection \ref{b} and let $u$ be a continuous function in an open set $\Omega\subseteq\erren$. We say that 
$$u\in \mathcal{A}(\Omega,\elle),$$ 
if 
\begin{equation*} 
\lim_{r\ttende 0}   \frac{M_r(u)(x)- u(x)}{Q_r(x)} 
\end{equation*}
exists in $\erre$ at every point $x\in\Omega.$  In this case we define
\begin{equation*} 
(\elle_a(u))(x):=\lim_{r\ttende 0}   \frac{M_r(u)(x)- u(x)}{Q_r(x)}. \end{equation*}
Furthermore, if $f\in C(\Omega,\erre)$ and there exists $u\in \mathcal{A}(\Omega,\elle)$ such that 
$$(\elle_a u)(x)=f(x)\fe x\in\Omega,$$
we say that $u$ is an {\it asymptotic average solution to}
$$\elle_a u =f \inn\Omega.$$
\end{definition}
In the case $f=0$ this definition was first introduced in the paper \cite{gl2004}.

The main result of our paper is the following theorem which extends Pizzetti's Theorem to the operators 
\eqref{cinque}.

\begin{theorem}\label{mainth} Let $f:\erren\ttende\erre$ be a  compactly supported continuous function. Define 
$$u_f(x):=\int_\erren \Gamma(x,y)f(y)\ dy,\qquad x\in\erren.$$
Then, $u_f\in \mathcal{A}(\erren,\elle)$ and 
$$\elle_a u_f=-f\inn\erren.$$
\end{theorem}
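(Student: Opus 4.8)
\emph{Overall strategy and reduction to smooth data.} The plan is to derive the theorem from the representation formula \eqref{otto} — which, however, requires the $C^2$ regularity that $u_f$ need not possess — by mollifying $f$, and then to recognise the limit through the operator $N_r$. Fix a compact neighbourhood $K$ of $\supp f$ and a function $\chi\in C_0^\infty(\erren)$ with $0\le\chi\le1$ and $\chi\equiv1$ on $K$. For any $g\in C_0^\infty(\erren)$ the potential $u_g$ satisfies $\elle u_g=-g$ in the weak sense of distributions — this is the defining property of $\Gamma$, combined with the symmetry $\Gamma(x,y)=\Gamma(y,x)$ and the self-adjointness of $\elle$ — hence, by hypoellipticity, $u_g\in C^\infty(\erren)$ and $\elle u_g=-g$ pointwise. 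In particular $u_\chi\in C^\infty(\erren)$, and since $\Gamma>0$ we have, for every bounded measurable $h$ with $\supp h\subseteq K$, the pointwise bound $|u_h|\le\|h\|_{\infty}\,u_\chi$, so $u_h$ is locally bounded. Now take a standard mollification $f_\eps:=f*\phi_\eps\in C_0^\infty(\erren)$, so that $\supp f_\eps\subseteq K$ for $\eps$ small and $\|f_\eps-f\|_\infty\to0$; applying the previous bound to $h=f_\eps-f$ shows $u_{f_\eps}\to u_f$ uniformly on compact subsets of $\erren$, whence $u_f\in C(\erren)$ and the statement $u_f\in\mathcal{A}(\erren,\elle)$ is meaningful.

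\emph{A representation formula for $u_f$.} Fix $\alpha>-1$. Since $u_{f_\eps}\in C^\infty(\erren)$, formula \eqref{otto} applies on $\Omega=\erren$ for every $\elle$-ball $\Omega_r(x)$ and, using $\elle u_{f_\eps}=-f_\eps$, reads $u_{f_\eps}(x)=M_r(u_{f_\eps})(x)+N_r(f_\eps)(x)$. I then let $\eps\to0$ term by term. On the left, $u_{f_\eps}(x)\to u_f(x)$. From \eqref{otto} with $u\equiv1$ (and $\elle 1=0$) one gets $M_r(1)\equiv1$, so $M_r(\cdot)(x)$ is integration against a probability measure on $\overline{\Omega_r(x)}$; as $u_\chi$ is bounded on that compact set and $|u_{f_\eps}-u_f|\le\|f_\eps-f\|_\infty u_\chi$, we obtain $M_r(u_{f_\eps})(x)\to M_r(u_f)(x)$. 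Similarly $N_r$ is a positive operator with $N_r(1)=Q_r(x)$ — on each $\Omega_\rho(x)$ one has $\Gamma(x,\cdot)\ge\tfrac1\rho$, and $\Omega_\rho(x)\subseteq\Omega_r(x)$ for $\rho\le r$ — so $|N_r(f_\eps)(x)-N_r(f)(x)|\le\|f_\eps-f\|_\infty Q_r(x)\to0$. Passing to the limit yields, for every $r>0$,
\[
M_r(u_f)(x)-u_f(x)=-N_r(f)(x).
\]

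\emph{Conclusion.} Divide by $Q_r(x)$, which is strictly positive because the integrand in the computed form of \eqref{undici} is positive on $(0,r)$, and let $r\to0$. By \eqref{sette} and the continuity of $f$ at $x$, $\omega(r):=\sup_{\Omega_r(x)}|f-f(x)|\to0$; using once more that $N_r$ is positive with $N_r(1)=Q_r$,
\[
\Big|\frac{N_r(f)(x)}{Q_r(x)}-f(x)\Big|=\frac{|N_r(f-f(x))(x)|}{Q_r(x)}\le\omega(r)\longrightarrow0
\]
(this is exactly Corollary 2.5 applied to the continuous function $f$). Hence $\dfrac{M_r(u_f)(x)-u_f(x)}{Q_r(x)}=-\dfrac{N_r(f)(x)}{Q_r(x)}\longrightarrow-f(x)$ at every $x\in\erren$, i.e.\ $u_f\in\mathcal{A}(\erren,\elle)$ and $\elle_a u_f=-f$ in $\erren$.

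\emph{Main obstacle.} The delicate point is the passage $M_r(u_{f_\eps})(x)\to M_r(u_f)(x)$: the kernel $K(x,\cdot)$ is singular at $y=x$, so pointwise convergence of the $u_{f_\eps}$ alone would not suffice, and the uniform bound on $\overline{\Omega_r(x)}$ obtained by dominating $u_{f_\eps}$ by the smooth potential $u_\chi$ is essential. The remaining ingredients — $\elle u_{f_\eps}=-f_\eps$, the $C^\infty$ regularity of $u_{f_\eps}$, the identity $M_r(1)\equiv1$, and the Ces\`aro-type limit $N_r(f)/Q_r\to f$ — follow respectively from the defining property of $\Gamma$, hypoellipticity, the representation formula, and the short estimate above.
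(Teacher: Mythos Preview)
Your proof is correct and follows essentially the same route as the paper: approximate $f$ by smooth compactly supported functions, apply the representation formula \eqref{otto} to the corresponding smooth potentials, pass to the limit in each term (using $M_r(1)=1$ and $N_r(1)=Q_r$) to obtain $u_f(x)=M_r(u_f)(x)+N_r(f)(x)$, and then conclude via the estimate $|N_r(f)(x)/Q_r(x)-f(x)|\le\sup_{\Omega_r(x)}|f-f(x)|\to0$. The only differences are cosmetic --- the paper uses a generic approximating sequence $(f_p)$ and bounds $\sup_{x\in G}\int_K\Gamma(x,y)\,dy$ directly rather than through the auxiliary potential $u_\chi$.
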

We will prove this theorem in the next section. Here, by using a result in \cite{gl2004}, we show a consequence of Theorem 
\ref{mainth}.
\begin{theorem}\label{consequence} Let $f,u:\erren\ttende\erre$ be  compactly supported continuous functions. Then, 
$$\elle_a u =- f  \inn\erren$$ if and only if 
$$\elle u = - f\inn \mathcal{D}'(\erren).$$
\end{theorem}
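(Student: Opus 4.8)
The plan is to reduce both implications to the $\elle$-potential of $f$, for which Theorem~\ref{mainth} already settles the matter, and then to show that the difference $u-u_f$ lies in the kernel of $\elle$ — a fact detected once through the hypoellipticity of $\elle$ and once through the Weyl-type regularity theorem for $\elle_a$ proved in \cite{gl2004}.

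First I would record the facts about $u_f(x):=\int_\erren\Gamma(x,y)f(y)\,dy$ that I need. Since $f$ is continuous with compact support and $\Gamma(x,\cdot)\in L^1_{\rm loc}(\erren)$, with singularity and decay governed by (ii)--(iv), $u_f$ is a well-defined bounded continuous function on $\erren$; hence $v:=u-u_f$ is continuous, in particular an element of $\mathcal D'(\erren)$. Property one: by Theorem~\ref{mainth}, $u_f\in\mathcal A(\erren,\elle)$ and $\elle_a u_f=-f$ in $\erren$. Property two: by Fubini's theorem and the symmetry $\Gamma(x,y)=\Gamma(y,x)$, the defining relation of the fundamental solution gives, for every $\varphi\in C_0^\infty(\erren,\erre)$,
$$\int_\erren u_f(y)\,\elle\varphi(y)\,dy=\int_\erren f(x)\left(\int_\erren\Gamma(x,y)\,\elle\varphi(y)\,dy\right)dx=-\int_\erren f(x)\varphi(x)\,dx,$$
i.e. $\elle u_f=-f$ in $\mathcal D'(\erren)$. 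Besides these two facts and \cite{gl2004}, I would only use that $\mathcal A(\erren,\elle)$ is a linear subspace of $C(\erren,\erre)$ on which $u\mapsto\elle_a u(x)$ is linear (immediate from the definition, since in \eqref{nove}--\eqref{undici} the quantity $Q_r(x)$ is independent of $u$), and that $\elle_a w=\elle w$ pointwise for every $w\in C^2$ (the remark preceding Definition~\ref{definizioneuno}, obtained from \eqref{otto}, \eqref{sette} and Corollary~2.5).

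Now the two implications. If $\elle u=-f$ in $\mathcal D'(\erren)$, then $\elle v=\elle u-\elle u_f=0$ in $\mathcal D'(\erren)$, so, $\elle$ being hypoelliptic, $v\in C^\infty(\erren)$ with $\elle v=0$ pointwise; being of class $C^2$, $v$ then belongs to $\mathcal A(\erren,\elle)$ with $\elle_a v=\elle v=0$, and therefore $u=u_f+v\in\mathcal A(\erren,\elle)$ with $\elle_a u=\elle_a u_f+\elle_a v=-f$ in $\erren$. Conversely, if $\elle_a u=-f$ in $\erren$ then $u\in\mathcal A(\erren,\elle)$, hence $v=u-u_f\in\mathcal A(\erren,\elle)$ and $\elle_a v=\elle_a u-\elle_a u_f=0$ in $\erren$; the regularity theorem of \cite{gl2004} — the case $f\equiv0$ of Definition~\ref{definizioneuno}, to the effect that the identical vanishing of $\elle_a v$ on an open set forces $v$ to be there a $C^\infty$ classical solution of $\elle v=0$ — then yields $v\in C^\infty(\erren)$ with $\elle v=0$ in $\erren$, hence $\elle v=0$ in $\mathcal D'(\erren)$, whence $\elle u=\elle u_f+\elle v=-f$ in $\mathcal D'(\erren)$.

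The honest remark about difficulty is that, once Theorem~\ref{mainth} and the regularity theorem of \cite{gl2004} are available, the argument is little more than bookkeeping with linearity and with the potential $u_f$: the substantive content of the equivalence is carried by those two results, with the hypoellipticity of $\elle$ playing, in the ``$\Leftarrow$'' direction, exactly the role the Caccioppoli--Weyl Lemma played in the introductory discussion. The only steps that deserve a little care are the identity $\elle u_f=-f$ in $\mathcal D'(\erren)$ — where Fubini's theorem and the symmetry of $\Gamma$ enter, together with the fact that $\elle$ is formally self-adjoint (consistently with the existence of a symmetric fundamental solution) — and the verification that $u_f$, and hence $u-u_f$, is a genuine continuous function, so that both ``$\elle_a(u-u_f)$'' and ``$\elle(u-u_f)$ in $\mathcal D'(\erren)$'' are meaningful; the compact support of $f$ and properties (ii)--(iv) of $\Gamma$ take care of this.
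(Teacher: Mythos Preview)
Your proposal is correct and follows essentially the same approach as the paper: reduce to $v=u-u_f$ via Theorem~\ref{mainth} and the distributional identity $\elle u_f=-f$, then invoke the result of \cite{gl2004} (there cited as Corollary~3.4) and hypoellipticity to pass between $\elle_a v=0$ and $\elle v=0$. The only cosmetic difference is that the paper presents the argument as a single chain of equivalences and quotes \cite{gl2004} for both directions of ``$\elle_a v=0\Leftrightarrow v\in C^\infty,\ \elle v=0$'', whereas you split the two implications and replace the easy direction by the elementary remark $\elle_a=\elle$ on $C^2$.
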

\begin{proof} By the previous Theorem \ref{mainth},
$$\elle_a u=-f \inn\erren$$ if and only if 
$$\elle_a(u-u_f)=0\inn\erren.$$
Then, by Corollary 3.4 in  \cite{gl2004}, $u-u_f\in C^\infty(\erren,\erre)$ and 
$$\elle(u-u_f)=0$$
in the classical sense (and vice versa). Since $\elle$ is hypoelliptic, this is equivalent to say that 
$$\elle(u-u_f)=0 \inn  \mathcal{D}'(\erren),$$
or that 
\begin{equation}\label{1.3}
\elle(u)=\elle(u_f) \inn  \mathcal{D}'(\erren).
\end{equation}
On the other hand, $\Gamma$ being a fundamental solution of $\elle$, $\elle(u_f)=-f\inn \mathcal{D}'(\erren)$.  Then, \eqref{1.3} can be written as follows:
$$\elle u=-f  \inn  \mathcal{D}'(\erren).$$ This completes the proof.
\end{proof}
\subsection{Bibliographical note} 

In recent years asymptotic mean value formulas characterizing classical or viscosity solutions to linear and nonlinear second order Partial Differential Equations have been proved by many authors; we refer to \cite{gl2004,manfredi2010,manfredi2010tug,manfredi2012,ferrari2015,manfredilindqvist2016,magnanini2017,mohammed2021,manfredi2022}.  In those papers one can find quite exhaustive bibliography on this subject.

We would also like to quote the papers \cite{bl2013} and \cite{kt} where the notion of asymptotic sub-harmonic function is introduced in sub-Riemannian settings to extend classical results  by Blaschke, Privaloff, Reade and Saks.

\section{Proof of Theorem  \ref{mainth}}
For the readers' convenience, we split this section in two subsections. 
\subsection{} Let $G$ be a compact subset of $\erren$ and let $r>0$. Define 
\begin{equation}\label{2.1}G_r:=\bigcup_{x\in G} \Omega_r(x).\end{equation}
Then, we have the following lemma.
\begin{lemma} \label{lemma2.1} For every compact set $G\subseteq \erren$ and for every $r>0$, the set $\overline{G}_r$ is compact.
\end{lemma}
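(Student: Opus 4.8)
The plan is to reduce the statement to a boundedness claim and then extract that boundedness from hypothesis (iii) on the fundamental solution together with its symmetry (i). Since $\overline{G}_r$ is closed by construction, the Heine--Borel theorem tells us that it is compact as soon as $G_r$ is bounded; so the whole task is to exhibit a single radius $R>0$ beyond which no point lies in any $\elle$-ball $\Omega_r(x)$ with $x\in G$.

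To produce such an $R$, I would apply hypothesis (iii) with the compact set $K:=G$ and $\eps:=\frac1r>0$: there exists $R>0$ such that $\sup_{x\in G}\Gamma(y,x)<\frac1r$ whenever $|y|>R$. By the symmetry property (i), $\Gamma(x,y)=\Gamma(y,x)$, so this says $\Gamma(x,y)<\frac1r$ for every $x\in G$ and every $y$ with $|y|>R$; equivalently, $y\notin\Omega_r(x)$ for all $x\in G$ as soon as $|y|>R$. Hence
$$G_r=\bigcup_{x\in G}\Omega_r(x)\subseteq\overline{B(0,R)},$$
so $G_r$ is bounded, and therefore $\overline{G}_r$ is closed and bounded, hence compact. (Equivalently, one may argue by contradiction: an unbounded $G_r$ would give sequences $x_k\in G$ and $y_k\in G_r$ with $|y_k|\ttende\infty$ and $\Gamma(x_k,y_k)>\frac1r$, while (iii) applied to $K=G$ forces $\Gamma(y_k,x_k)\ttende 0$, contradicting (i).)

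The argument is short and I do not expect a genuine obstacle; the one point worth flagging is a matter of bookkeeping: hypothesis (iii) is stated with the divergent variable in the \emph{first} slot of $\Gamma$ and the compact set in the \emph{second}, whereas in $\Gamma(x,y)$ with $x\in G$ the divergent points $y$ sit in the second slot — it is precisely the symmetry (i) that lets us interchange the two arguments and bring the estimate into usable form. Note also that compactness of $G$ is used only through its boundedness (so that $K:=G$ is an admissible compact set in (iii)), and that properties (ii) and (iv) play no role in this lemma.
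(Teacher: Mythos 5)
Your proof is correct and follows essentially the same route as the paper: reduce to boundedness of $G_r$, then use hypothesis (iii) with $K=G$ (together with the symmetry (i) to put the divergent variable in the right slot) to exclude points far from the origin from every $\Omega_r(x)$ with $x\in G$. The paper phrases this as a proof by contradiction along a sequence $|z_n|\to\infty$, which is exactly the alternative you mention in parentheses; there is no substantive difference.
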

\begin{proof}  It is enough to prove that $G_r$ is bounded. We argue by contradiction and assume that $G_r$ is not bounded. Then, there exists a sequence $(z_n)$ in $G_r$ such that 
$$|z_n|\ttende \infty.$$
By the very definition of $G_r$, for every $n\in\enne$, there exists $x_n\in G$ such that $z_n\in \Omega_r(x_n)$. This means that
$$\Gamma(x_n, z_n)>\frac{1}{r}.$$
As a consequence, 
\begin{equation*}
\frac{1}{r}<\Gamma(x_n,z_n)\le \sup_{x\in G} \Gamma(x,z_n),
\end{equation*}
so that, by the assumption (iii) related to $\Gamma$ 
\begin{equation*}
0< \frac{1}{r}\le \lim_{n\ttende\infty}\left(  \sup_{x\in G} \Gamma(x,z_n)\right)=0.
\end{equation*}
This contradiction shows that $G_r$ is bounded.
\end{proof}
\subsection{} In this subsection we prove the following lemma.
\begin{lemma} Let $G$ be a compact subset of $\erren$ and let $r>0$. Then, there exists a positive constant $C_r(G)$ such that
\begin{equation}\label{2.2} 
\sup_{x\in G} Q_r(x)\le C_r(G).
\end{equation}
\end{lemma}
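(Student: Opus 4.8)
The plan is to bound $Q_r(x)$ uniformly for $x\in G$ by going back to the two expressions for $Q_r(x)$ given in the introduction. Recall that
\begin{equation*}
Q_r(x)=\int_0^r \frac{|\Omega_\rho(x)|}{\rho^2}\left(1-\left(\frac{\rho}{r}\right)^{\alpha+1}\right)\ d\rho,
\end{equation*}
so that, since $0\le 1-(\rho/r)^{\alpha+1}\le 1$ on $(0,r)$, it suffices to produce a uniform-in-$x$ bound for $\int_0^r \frac{|\Omega_\rho(x)|}{\rho^2}\,d\rho$. The first observation is a monotonicity fact: for $\rho\le r$ one has $\Omega_\rho(x)\subseteq\Omega_r(x)\subseteq G_r$, hence $|\Omega_\rho(x)|\le |G_r|<\infty$ by Lemma \ref{lemma2.1}. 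This already controls the integrand away from $\rho=0$, but near $\rho=0$ the factor $1/\rho^2$ is not integrable against a constant, so the crude bound is not enough; one needs the decay $|\Omega_\rho(x)|/\rho\to 0$ stated in subsection \ref{d}, made uniform in $x\in G$.

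The key step, therefore, is to show that $\sup_{x\in G}|\Omega_\rho(x)| = o(\rho)$ as $\rho\to 0$, or at least that $\sup_{x\in G}|\Omega_\rho(x)|\le \psi(\rho)$ with $\int_0^r \psi(\rho)/\rho^2\,d\rho<\infty$. I would extract this from the original (unnormalised) representation formula \eqref{otto}–\eqref{undici}: take $u\equiv 1$, which is certainly $C^2$, and note that $M_r(1)(x) = \frac{\alpha+1}{r^{\alpha+1}}\int_{\Omega_r(x)} K(x,y)\,dy$ is itself computable. A cleaner route is to use the classical Poisson–Jensen identity behind \eqref{undici}, namely
\begin{equation*}
\int_{\Omega_\rho(x)}\left(\Gamma(x,y)-\tfrac{1}{\rho}\right)dy \;=\; \int_0^\rho \frac{|\Omega_s(x)|}{s^2}\,ds,
\end{equation*}
which follows from the coarea/layer-cake formula applied to the superlevel sets of $\Gamma(x,\cdot)$ together with $\Gamma(x,\cdot)\in L^1_{\rm loc}$ (assumption (iv)). Call the left-hand side $J_\rho(x)$. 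Then $J_\rho(x)$ is nonnegative, nondecreasing in $\rho$, and $J_\rho(x)=\int_{\Omega_\rho(x)}\Gamma(x,y)\,dy - \frac{|\Omega_\rho(x)|}{\rho}$; since $\Gamma(x,\cdot)>1/\rho$ on $\Omega_\rho(x)$ we get $\frac{|\Omega_\rho(x)|}{\rho}\le \int_{\Omega_\rho(x)}\Gamma(x,y)\,dy \le \int_{G_r}\Gamma(x,y)\,dy$ for $\rho\le r$, and this last integral is finite and, I claim, bounded uniformly for $x\in G$ — a point I return to below. More importantly, $J_\rho(x)\le J_r(x)\le \int_{G_r}\Gamma(x,y)\,dy=:C_1(G)$, and rewriting
\begin{equation*}
Q_r(x)=\frac{\alpha+1}{r^{\alpha+1}}\int_0^r \rho^\alpha J_\rho(x)\,d\rho \le \frac{\alpha+1}{r^{\alpha+1}}\,C_1(G)\int_0^r \rho^\alpha\,d\rho = C_1(G),
\end{equation*}
using $\alpha>-1$ so that $\int_0^r\rho^\alpha\,d\rho = r^{\alpha+1}/(\alpha+1)$ converges. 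So in fact $\sup_{x\in G}Q_r(x)\le C_1(G)$, and we may set $C_r(G):=C_1(G)=\sup_{x\in G}\int_{G_r}\Gamma(x,y)\,dy$.

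It remains to justify that $C_1(G)=\sup_{x\in G}\int_{G_r}\Gamma(x,y)\,dy$ is finite, and this is the step I expect to be the main obstacle, since it is the only place where one must combine several of the hypotheses on $\Gamma$. The function $(x,y)\mapsto \Gamma(x,y)$ is continuous off the diagonal, $G\times \overline{G_r}$ is compact by Lemma \ref{lemma2.1}, and the only singularity of the integrand is along $x=y$. By symmetry $\Gamma(x,y)=\Gamma(y,x)$ and by assumption (iv) each slice $\Gamma(x,\cdot)$ is locally integrable; one then upgrades this to a uniform bound by a compactness argument: cover $G$ by finitely many small balls $B(x_k,\eta)$, and on each such ball estimate $\int_{G_r}\Gamma(x,y)\,dy$ by splitting $G_r$ into the part within distance $2\eta$ of $x_k$ — where a uniform local-integrability bound is needed — and the part outside, where $\Gamma$ is bounded by continuity and compactness. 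The uniform local $L^1$ bound near the diagonal is the genuinely delicate point; it can be obtained from the fact that $\Gamma$ is a fundamental solution of the hypoelliptic operator $\elle$ (so that $\Gamma(x,\cdot)$ is, near $x$, controlled by the well-known Nagel–Stein–Wainger type estimates in terms of the control distance, which are locally uniform), or, more in the spirit of this paper, directly from the finiteness and continuity of the potential $u_{\chi}(x)=\int_{G_r}\Gamma(x,y)\,dy$ of the bounded compactly supported function $\chi=\mathbf{1}_{G_r}$, whose continuity on $\erren$ is exactly the kind of fact established (for general continuous compactly supported data) in the course of proving Theorem \ref{mainth}. Once $x\mapsto \int_{G_r}\Gamma(x,y)\,dy$ is known to be finite and continuous, its supremum over the compact set $G$ is finite, which is the desired constant.
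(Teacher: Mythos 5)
Your argument is correct and essentially coincides with the paper's: both reduce the bound to $\sup_{x\in G}\int_{G_r}\Gamma(x,y)\,dy$ via the inequality $\int_{\Omega_\rho(x)}\bigl(\Gamma(x,y)-\tfrac1\rho\bigr)\,dy\le\int_{G_r}\Gamma(x,y)\,dy$ for $x\in G$, $\rho\le r$, followed by the computation $\frac{\alpha+1}{r^{\alpha+1}}\int_0^r\rho^\alpha\,d\rho=1$. The finiteness point you flag as the delicate step is dispatched in the paper simply by replacing $\mathbf{1}_{G_r}$ with a nonnegative cutoff $\varphi\in C_0^\infty(\erren,\erre)$ equal to $1$ on $G_r$ and setting $C_\varphi(G):=\sup_{x\in G}\int_{\erren}\varphi(y)\Gamma(x,y)\,dy$ (finiteness being implicit, via the smoothness of potentials of $C_0^\infty$ data), so no more detail is given there than in your sketch.
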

\begin{proof} Keeping in mind the definition of $Q_r(x)$ (see \eqref{undici}) for every $x\in G$ we get
\begin{eqnarray} \label{2.3} 
  Q_r(x) &\le& \frac{\alpha +1}{r^{\alpha +1}} \int_0^r \rho^\alpha \left( \int_{\Omega_r(x)} \Gamma(x,y) \  dy\right)\ d\rho \\ \nonumber
  &\le& (\mbox{by}\  \eqref{2.1})\quad  \frac{\alpha +1}{r^{\alpha +1}} \int_0^r \rho^\alpha \left( \int_{G_r} \Gamma(x,y) \  dy\right)\ d\rho. 
\end{eqnarray}

On the other hand, if $\varphi \in C_0^\infty(\erren,\erre)$ is such that $\varphi=1$ on $G_r,$ $\varphi\geq 0$ (such a function exists thanks to Lemma \ref{lemma2.1}), we have 
\begin{eqnarray*} \int_{G_r} \Gamma(x,y)\ dy &\le& \int_{\erren} \varphi(y)\Gamma(x,y)\ dy 
\\ &\le&\sup_{x\in G}  \int_{\erren} \varphi(y)\Gamma(x,y)\ dy 
\\ &=& C_\varphi(G).
\end{eqnarray*}
Using this estimate in \eqref{2.3} we obtain 
\begin{eqnarray*}  \sup_{x\in G} Q_r(x) &\le& C_\varphi(G) \frac{\alpha +1}{r^{\alpha +1}}    \int_0^r \rho^\alpha\ d\rho  \\
 &=& C_\varphi(G):=C_r(G).
\end{eqnarray*}\end{proof}
\begin{remark} \label{remarklemma22} 
Since $Q_\rho(x) \subseteq Q_r(x) $ for every $\rho\in ]0,r[$, we can assume $$C_\rho(G)\le C_r(G)$$
for every $0<\rho<r$.
\end{remark}
\subsection{} Now, we show a kind of continuity property of the $\Omega_r(x)$ balls with respect to the Euclidean topology.  Precisely, we prove the following lemma.
\begin{lemma} For every $x\in \erren$ and for every $R>0$ there exists $r>0$ such that 
$$\Omega_r(x)\subseteq B(x,R).$$
\end{lemma}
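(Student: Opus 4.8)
The plan is to argue by contradiction, exploiting the monotonicity of the family $\{\Omega_r(x)\}_{r>0}$ together with the fact that $\Gamma$ is finite and continuous off the diagonal. Note first that the sets $\Omega_r(x)$ are increasing in $r$: if $0<\rho\le r$, then $\Gamma(x,y)>1/\rho$ implies $\Gamma(x,y)>1/r$, so $\Omega_\rho(x)\subseteq\Omega_r(x)$. Also, by Lemma \ref{lemma2.1} applied to the compact set $G=\{x\}$ (for which $G_r=\Omega_r(x)$), each $\overline{\Omega_r(x)}$ is compact.

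Now suppose the statement fails for some $x\in\erren$ and some $R>0$. Then for every $n\in\enne$ the inclusion $\Omega_{1/n}(x)\subseteq B(x,R)$ is false, so we may pick $y_n\in\Omega_{1/n}(x)$ with $|y_n-x|\ge R$. Since $\Omega_{1/n}(x)\subseteq\Omega_1(x)$ for every $n$, the sequence $(y_n)$ lies in the compact set $\overline{\Omega_1(x)}$, hence a subsequence $y_{n_k}\to y_\ast$; passing to the limit in $|y_n-x|\ge R$ gives $|y_\ast-x|\ge R$, and in particular $y_\ast\neq x$. Because $\Gamma$ is smooth on the complement of the diagonal and $y_\ast\neq x$, the function $\Gamma(x,\cdot)$ is continuous at $y_\ast$, so $\Gamma(x,y_{n_k})\to\Gamma(x,y_\ast)<\infty$. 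On the other hand $y_{n_k}\in\Omega_{1/n_k}(x)$ forces $\Gamma(x,y_{n_k})>n_k\to\infty$, a contradiction. This proves the lemma.

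Alternatively, and without sequences, one can set $K:=\overline{\Omega_1(x)}\meno B(x,R)$, which is compact by Lemma \ref{lemma2.1} and does not contain $x$; if $K=\emptyset$ the choice $r=1$ already works, and otherwise $\Gamma(x,\cdot)$ attains a finite maximum $m$ on $K$, so any $r\in\,]0,1]$ with $1/r>m$ does the job, since $\Omega_r(x)\subseteq\Omega_1(x)$ while no point $y$ with $\Gamma(x,y)>1/r>m$ can belong to $K$, hence $\Omega_r(x)\subseteq\Omega_1(x)\cap B(x,R)$. I do not expect any real obstacle here; the only point requiring a little care is to feed in the correct compactness input (Lemma \ref{lemma2.1} with $G=\{x\}$) so that Bolzano--Weierstrass, respectively the attainment of the maximum of $\Gamma(x,\cdot)$, is justified.
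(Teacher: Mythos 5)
Your proof is correct and follows essentially the same contradiction argument as the paper: extract a sequence $y_n\in\Omega_{r_n}(x)\meno B(x,R)$, pass to a convergent subsequence with limit $y^\ast\neq x$, and contradict the finiteness of $\Gamma(x,y^\ast)$ against $\Gamma(x,y_n)>1/r_n\to\infty$. The only cosmetic difference is that you obtain boundedness of $(y_n)$ from Lemma \ref{lemma2.1} with $G=\{x\}$ (and your sequence-free variant repackages the same compactness and continuity facts), whereas the paper derives it directly from the decay assumption (iii) on $\Gamma$.
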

\begin{proof} We still argue by contradiction and assume the existence of $R>0$ such that $\Omega_r(x)\nsubseteq B(x,R)$
for every $r>0$. Then, if $(r_n)$ is a sequence of real positive numbers such that $r_n \searrow 0$, for every $n\in\enne$ there exists $y_n\in\Omega_{r_n}(x)$ such that $$y_n\notin B(x,R).$$
This means
$$y_n\notin B(x,R)\andd \Gamma(x,y_n)>\frac{1}{r_n}.$$
Since $\Gamma(x,y)\ttende 0$ as $y\ttende\infty$ and $\dfrac{1}{r_n}\ttende \infty$, the sequence $(y_n)$ is bounded. As a consequence, we may assume 
$$\lim_{n\ttende\infty} y_n=y^\ast$$ for a suitable $y^\ast\in\erren.$  Then  $y^\ast\notin B(x,R).$ In particular $y\neq x$ so that $\Gamma(x,y)<\infty.$ On the other hand, $$\Gamma(x,y^\ast)=\lim_{n\ttende\infty} \Gamma(x,y_n)\geq \lim_{n\ttende\infty}  
\frac{1}{r_n}=\infty.$$ 
This contradiction proves the lemma. 
\end{proof}
From the previous lemma we obtain the following corollary.
\begin{corollary}\label{corollary24?} Let $f:\erren\ttende\erre$ be a continuous function. Then, for every $x\in\erren,$ 
$$\sup_{y\in\Omega_r(x)} |f(y)-f(x)| \ttende 0 \ass r\ttende 0.$$
\end{corollary}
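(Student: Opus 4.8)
The plan is to deduce the corollary directly from the preceding lemma, once one notes the monotonicity of the $\elle$-balls in the radius. First I would observe that if $0<\rho<r$ then $\Omega_\rho(x)\subseteq\Omega_r(x)$: indeed $\Gamma(x,y)>\frac1\rho$ forces $\Gamma(x,y)>\frac1r$. Hence the family $\bigl(\Omega_r(x)\bigr)_{r>0}$ is increasing in $r$, so as soon as a single $\elle$-ball $\Omega_{r_0}(x)$ is contained in a given Euclidean ball, every smaller $\elle$-ball is contained in it as well.

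Next, fix $x\in\erren$ and let $\eps>0$. By continuity of $f$ at $x$ there is $R>0$ with $|f(y)-f(x)|<\eps$ for every $y\in B(x,R)$. Applying the previous lemma to this $R$ we obtain $r_0>0$ such that $\Omega_{r_0}(x)\subseteq B(x,R)$. By the monotonicity just noted, $\Omega_r(x)\subseteq\Omega_{r_0}(x)\subseteq B(x,R)$ for every $0<r\le r_0$, whence
$$\sup_{y\in\Omega_r(x)}|f(y)-f(x)|\le\eps\qquad\mbox{for every }0<r\le r_0.$$
Since $\eps>0$ was arbitrary, this shows $\sup_{y\in\Omega_r(x)}|f(y)-f(x)|\ttende 0$ as $r\ttende 0$, which is the claim.

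There is essentially no obstacle here: the only point worth isolating is the monotonicity of $\bigl(\Omega_r(x)\bigr)_r$, which is exactly what upgrades the qualitative \av there exists $r$\cv\ of the lemma to the \av for every sufficiently small $r$\cv\ needed to conclude the limit; everything else is just the $\eps$-characterization of continuity at the point $x$. One could alternatively invoke the identity \eqref{sette}, but the monotonicity argument is cleaner and self-contained.
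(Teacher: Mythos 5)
Your proof is correct and follows essentially the same route as the paper's: continuity of $f$ at $x$ gives a Euclidean ball $B(x,R)$ on which the oscillation is small, the preceding lemma gives $\Omega_{r_0}(x)\subseteq B(x,R)$, and the monotonicity $\Omega_r(x)\subseteq\Omega_{r_0}(x)$ for $r<r_0$ (which the paper uses implicitly in its chain of suprema and you spell out explicitly) upgrades this to all sufficiently small $r$. No further comment is needed.
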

\begin{proof} Since $f$ is continuous at $x$, for every $\eps >0$ there exists $R>0$ such that 
$$\sup_{y\in B(x,r)} |f(y)-f(x)| <\eps.$$
By the previous lemma, there exists $r_0>0$ such that $\Omega_{r_0}(x)\subseteq B(x,r).$ Then, for every $r<r_0$, 
$$\sup_{y\in\Omega_r(x)} |f(y)-f(x)| \le \sup_{y\in\Omega_{r_0}(x)} |f(y)-f(x)| \le \sup_{y\in B(x,r)} |f(y)-f(x)| <\eps.$$
We have so proved that for every $\eps>0$ there exists $r_0>0$ such that 
$$\sup_{y\in \Omega_r(x)} |f(y)-f(x)| <\eps$$ for every $r<r_0.$  Hence, 
$$\lim_{r\ttende0 } \left( \sup_{y\in\Omega_r(x)} |f(y)-f(x)|\right) = 0.$$
\end{proof}
\subsection{}
Let $f$ as in Theorem \ref{mainth} and, to simplify the notation,  let us denote $u_f$ by $u$.
The aim of this subsection is to prove the following identity:
\begin{equation} \label{imp!} 
u(x)=M_r(u)(x)+N_r(f)(x) \quad \forall x\in\erren.
\end{equation}

To this end we choose a sequence $(f_p)$ in $C_0^\infty (\erren,\erre)$  with the following properties:

\begin{itemize}
\item[{\rm(i)}]  there exists a compact set $K\subseteq\erren$ such that $\supp f \subseteq K$ and $\supp f_p\subseteq K$ for every $p\in\enne$;
\item[{\rm(ii)}] $\sup_K |f_p-f|\ttende 0\ass p\tende\infty.$
\end{itemize}

For simplicity reasons,  let us put $u_p=u_{f_p}$, i.e., 
\begin{eqnarray*}
u_p(x)=\int_\erren \Gamma(x,y)f_p(y)\ dy = \int_K\Gamma(x,y) f_p(y)\ dy.
\end{eqnarray*} 
Then, by Lebesgue's dominated convergence Theorem, 
\begin{eqnarray*}
u(x)=\lim_{p\ttende\infty} u_p(x) =\int_K \Gamma(x,y)\lim_{p\ttende\infty} f_p(y)\ dy, 
\end{eqnarray*} 
for every $x\in\erren.$
Actually, we have a stronger result. For every compact set $G\subseteq\erren$,
\begin{eqnarray*} \sup_G|u_p-u| &\le& \sup_{x\in G} \left| \int_K \Gamma(x,y) (f_p(y)- f(y))\ dy \right|  \\
&\le& \sup_{K} |f_p-f| \sup_{x\in G}\int_K \Gamma(x,y)\ dy \\
&=& C(G,K) \sup_K |f_p-f|.
\end{eqnarray*} 
We explicitly observe that $C(G,K)$ is a strictly positive finite constant. 

Hence, 
\begin{equation}\label{2.5v}
\sup_G |u_p-u|\ttende 0\ass p\ttende\infty. 
\end{equation}
Moreover, for every $p\in\enne$,
$$u_p\in C^\infty (\erren,\erre) \andd \elle u_p=-f_p.$$
Then, by identity \eqref{otto}, 
\begin{eqnarray*}  
u_p(x)&=&M_r(u_p)(x)-N_r(\elle u_p)(x)\\&=&M_r(u_p)(x)+N_r(f_p)(x)
\end{eqnarray*}
for every $p\in\enne.$ 

We have already noticed that $u_p(x)\ttende u(x)$ as $p\ttende \infty.$

To prove \eqref{imp!} we now show that
\begin{eqnarray}\label{2.5} 
\lim_{p\ttende\infty} M_r(u_p)(x)=M_r(u)(x)
\end{eqnarray} 
and
\begin{eqnarray}\label{2.6} 
\lim_{p\ttende\infty} N_r(f_p)(x)=N_r(f)(x).
\end{eqnarray}

For every $x\in\erren$ we have
\begin{eqnarray*}
 | M_r(u_p)(x) -M_r(u)(x)|&= &| M_r(u_p-u)(x)|\\&\le& \sup_{\Omega_r(x)} |u_p-u| M_1(1)(x)\\&=&  \sup_{\Omega_r(x)} |u_p-u|.
\end{eqnarray*} 
Since $\overline{\Omega_r(x)}$ is compact (see Lemma \ref{2.1}), and keeping in mind \eqref{2.5v}, the last right hand side goes to zero as $p\ttende\infty.$
Then, 
\begin{eqnarray*} | M_r(u_p)(x) -M_r(u)(x)|\ttende 0 \ass p\ttende\infty,\end{eqnarray*} 
 proving \eqref{2.5}. 

Let us now prove \eqref{2.6}.  For every $x\in\erren$, we have 
\begin{eqnarray*} | N_r(f_p)(x) -N_r(f)(x)|&\le&| N_r(|f_p-f|)(x)| \\&\le& \sup_{K} |f_p-f| Q_r(x).
\end{eqnarray*} 
Then, for every compact set $G\subseteq\erren$,
\begin{eqnarray*} \sup_G | N_r(f_p) -N_r(f)|&\le& \sup_{K} |f_p-f| \sup_{x\in G} |Q_r(x)|\\ &\le& (\mbox{by}\  \eqref{2.2} )\quad   C_r(G)\sup_K|f_p-f|.
\end{eqnarray*} 
So we have proved that $(N_r(f_p))$ is uniformly convergent to $N_r(f)$ on every compact subset of $\erren$. This, in particular, implies  \eqref{2.6}.

\subsection{} In this subsection we complete the proof of Theorem \ref{mainth}. To this end we first remark that, thanks to 
\eqref{imp!}, for every $x\in\erren,$ we have 

\begin{eqnarray*} 
\frac{M_r(u)(x)-u(x)}{Q_r(x)} =  - \frac{N_r(f)(x)}{Q_r(x),}  
\end{eqnarray*} so that, as $f(x)$ is constant with respect to  $y\in \Omega_r(x)$,  
\begin{eqnarray*} 
\left|\frac{M_r(u)(x)-u(x)}{Q_r(x)} +  f(x) \right| &= & 
\frac{1}{Q_r(x)} \left| {N_r(f(x)- f})(x)\right| \\ &\le &  \sup_{y\in\Omega_r(x)} |f(u)-f(y)| Q_r(x).
 \end{eqnarray*} 
 
 By Corollary \ref{corollary24?}  and Remark  \ref{remarklemma22}, the left hand side of the previous inequality goes to zero 
 as $r\ttende 0$. Hence,
 
 \begin{eqnarray*} 
 \lim_{r\ttende 0} \frac{M_r(u)(x)-u(x)}{Q_r(x)} = -  f(x) 
 \end{eqnarray*} 
 for every $x\in\erren$. This completes the proof of Theorem \ref{mainth}.

\section*{Declarations}  

\begin{itemize}

\item[-] Conflict of interest: The authors declare that they have no conflict of interest.
\item[-]  Data availability:  Data sharing not applicable to this article as no datasets were generated or analysed during
the current study.

\item[-]   Funding:  The first author  has been partially supported by the Gruppo Nazionale per l'Analisi Matematica, la Probabilit\`a e le
loro Applicazioni (GNAMPA) of the Istituto Nazionale di Alta Matematica (INdAM). 

\end{itemize}

%\bibliographystyle{plain} 
%\bibliography{bibliografia}

\end{document}